\theoremstyle{plain}
\newtheorem{thm}{Theorem}[section]
\newtheorem{lem}[thm]{Lemma}
\theoremstyle{definition}
\theoremstyle{remark}
\newtheorem{rem}[thm]{Remark}
\title{On partial convolution and  mean comparison }
\author{J\"org Kampen}
\begin{document}

\maketitle

\begin{abstract}
We prove mean comparison results from a different perspective, where we introduce the concept of partial convolutions. For a parabolic initial data problem on the whole domain of dimension $n$ we consider data functions which live on a subspace of lower dimension $k$ and coefficient functions which live on the whole space or some subspace of dimension $l$. The pair of natural numbers  $(k,l)$ is called  a strong partial convolution pair if for some  coordinate transformations  the coefficients functions of the equation and the initial data functions live in complementary  linear  spaces  of dimension $k$ and $l$ respectively. Here the qualification 'strong' indicates that this transformation exists without any further restriction concerning the intersection of the original  linear subspaces involved, and that refined concepts are possible in this respect. For purely second order parabolic equations we show that $(1,n)$ for any $n\geq 2$ is a strong partial convolution pair. As a consequence new criteria for convexity preservation for classes of initial value problems are obtained. A further  consequence is mean comparison for univariate convex  functions of a considerable class of sums of locally continuous martingales.  We leave the problem of determination of all  partial convolution pairs as an open problem. In the literature it is observed that $(n,n)$ is not a strong partial convolution pair.

\end{abstract}

\section{Introduction}

This work is motivated by the problem of mean stochastic comparison for univariate convex functions  of sums of locally continuous martingales and the solution of related optimal control problems.  Classical stochastic analysis results such as Levy's characterisation of Brownian motions and representation theorems of local continuous martingales in terms of stochastic integrals with respect to Brownian motions show that Brownian motion is the fundamental continuous martingale. The former tells us that local $n$-dimensional martingales with cross variation matrix process equal to identity times time is indeed a $n$-dimensional Brownian motion with respect to the same filtration, and the latter tells us that local continuous martingales can be represented in terms of stochastic integrals with respect to Brownian motions where the integrand is measurable and adapted (to a possibly augmented filtration) such that the covariation matrix process is bounded and can be represented in terms of simple square form time integrals of the integrand components. Therefore  processes $X=(X_1,\cdots,X_n)$ of the form
\begin{equation}\label{eq1}
X(t)-X(0)=\int_0^t\sigma\left(X(s)\right)dW(s),
\end{equation}
with a $n$-dimensional  standard Brownian motion $W=(W_1,\cdots,W_n)$ (for construction cf. \cite{W}) and a (mildly regular)  matrix volatility function $\sigma(X)=\left(\sigma_{ij}(X)\right)_{1\leq i,j\leq n}$ represent a considerable class of  $n$-dimensional continuous martingales, which can be qualified to be 'special' only from a quite abstract point of view. The assumption of a quadratic volatility matrix is for simplicity and not essential.
For univariate convex data functions  $f$ we are interested in comparison results for stochastic sums of the form
\begin{equation}\label{eq2}
\sigma\sigma^T\leq \rho\rho^T\rightarrow E^x\left(f\left(\sum_{i=1}^nc_i X_i(t)\right) \right)\leq E^x\left(f\left(\sum_{i=1}^nc_i Y_i(t)\right) \right)
\end{equation}
where
\begin{equation}\label{eq3}
Y(t)-Y(0)=Y(t)-X(0)=\int_0^t\rho\left(Y(s)\right)dW(s),
\end{equation}
is another martingal diffusion, and $\sigma\sigma^T\leq \rho\rho^T$ means that  $ \rho\rho^T-\sigma\sigma^T$ is a nonnegative matrix.  Versions with strict inequalities in (\ref{eq2}) are also of interest, especially for solving optimal control problems.

 For example, in finance, options on  portfolio processes
\begin{equation}
\Pi_q=\sum_{i=1}^n q_i\sigma_iS_i,~\mbox{where}~\frac{dS}{S}=\sigma(S)dW,~S(0)=x\in {\mathbb R}^n
\end{equation} 
may be considered, where $\frac{dS}{S}=(\frac{dS_1}{S_1},\cdots,\frac{dS_n}{S_n})$ represent $n$ underlying assets and $\sigma$ is a matrix-valued volatility function. Here ${\mathbb R}^n$ denotes the $n$-dimensional Euclidean space, whee ${\mathbb R}$ denotes the field of real numbers.
This is a trading account with $n$ lognormal processes $\left(S_i \right)_{1\leq i\leq n}$, where  $q_i\in [-1,1]$ are bounded trading positions, where comparison shows under mild assumptions that any solution of an optimal control problem
\begin{equation}
\sup_{-1\leq q_i\leq 1,~ 1\leq i\leq n}E^x\left(f(\Pi_q) \right),~f~\mbox{convex, exponentially bounded},
\end{equation}
for the trading postions $q_i\in [-1,1]$ maximizes of the basket volatility, i.e.,
\begin{equation}
\sup_{-1\leq q_i\leq 1,~ 1\leq i\leq n}\frac{\sqrt{ <qS,\sigma\sigma^T(S)qS>}}{\sum_{i=1}^nS_i}.
\end{equation}
Here, $qS=(q_1S_1,\cdots,q_nS_n)^T$,  $\sigma^T$ is the transposed volatility matrix function, and $<.,.>$ denotes the scalar product. This is an example which shows that multivariate comparison results for stochastic sums are qualitative different from univariate results: in the latter case Passport options written on one asset can be subsumed by Lookback options, but multivariate passports are different from Lookback options. Control problems of this forms have been studied in the context of viscosity solution concept, where an overview can be found in \cite{Cr} and \cite{FS}. The classical theory is represented in \cite{Kryl1}. The application of passport options introduced in \cite{HLP}, is considered in \cite{DY}, \cite{HH}, \cite{SV}. Multivariate passport options are considered in \cite{KPP}.

Back to comparison  itself we add two remarks. The restriction to continuous martingales is natural since comparison for jump 
diffusions does not hold in general since it does not hold for simple Poisson processes as is shown in \cite{V}. Secondly, there are
rather immediate extensions of comparison results to semimartingales if the data are monoton in addition of being convex. As the same methods of proof given here applies to this case, we only mention this  rather obvious possibility of extension. 
We denote continuous functions on the field of real numbers by $C({\mathbb R})$, and consider the class of univariate convex  data functions $f\in C({\mathbb R})$ which satisfy the exponential growth condition
\begin{equation}\label{growthcd}
\mbox{for all}~x\in {\mathbb R}~|f(x)|\leq c\exp\left(c|x|^{2-\epsilon} \right) 
\end{equation}
for some constant $c>0$.
The purpose of this paper is to provide a different elementary proof of the main result in \cite{KC}. In the list of minimal assumptions for comparison we need the existence of continuous solutions of the stochastic differential equations describing the stochastic processes involved, which is ensured by bounded Lipschitz continuous volatility matrices. Furthermore the existence of the mean values  $ E^{x_0}\left(f\left(\sum_{i=1}^nc_i X_i(t)\right) \right), E^{x_0}\left(f\left(\sum_{i=1}^nc_i Y_i(t)\right) \right)$ for arbitrary starting points $x_0\in {\mathbb R}^n$ are needed. This involves certain growth conditions for the data function $f$, but also the existence of  solutions for the associated initial data problems. In this case we say that the mean value functions exist. In order to obtain strict inequalities we have to assume the existence of a positive $C^2$ density. This assumption holds certainly if an uniform ellipticity condition is satisfied, i.e.,
\begin{equation}
\begin{array}{ll}
(E): \exists \lambda,\Lambda \forall z\in {\mathbb R}^n, z\neq 0:~0<\lambda\leq \sum_{ij=1}^n(\sigma\sigma^T)_{ij}z_iz_j\leq \Lambda<\infty\\
\\
\hspace{1cm}\&~~ 0<\lambda\leq \sum_{ij=1}^n(\rho\rho^T)_{ij}z_iz_j\leq \Lambda<\infty.
\end{array}
\end{equation}
Related a priori estimates used in our argument can be found in standard references such as  \cite{Kryl2} , and \cite{LS}.
Positive $C^2$-densities can be also ensured for much weaker conditions, the so-called
\begin{equation}
(H):~\mbox{ H\"ormander condition (cf. \cite{KS}}).
\end{equation}
 The specification of the condition in (H) can be obtained from the latter reference. It is remarkable that the assumption in (H) is sufficient in order to get comparison with strict inequalities, because the upper bounds for spatial derivatives of the densities have no  decay at spatial infinity in general. However, for the density itself we have Gaussian upper bounds and it turns out that this suffices if the mean value functions are finite.
\begin{thm}\label{main1}
Let $T>0$, $t\in [0,T]$,  $f\in C({\mathbb R})$ be convex, and assume that $f$ satisfies the exponential growth condition in (\ref{growthcd}). Assume that $c_i>0$ are some positive real constants for $1\leq i\leq n$. Furthermore, let $X,Y$ be It\^o's diffusions with $x_0=X(0)=Y(0)$, where
\begin{equation}
X(t)=X(0)+\int_0^t\sigma\left(X(s)\right)dW(s),
\end{equation}
\begin{equation}
Y(t)=Y(0)+\int_0^t\rho\left(Y(s)\right)dW(s),
\end{equation}
with $n\times n$-matrix valued bounded Lipschitz-continuous functions $x\rightarrow \sigma\sigma^T(x)$ and $y\rightarrow \rho\rho^T$. If $\sigma\sigma^T\leq \rho\rho^T$, then for $0\leq t\leq T$ we have
\begin{equation}
E^{x_0}\left(f\left(\sum_{i=1}^nc_i X_i(t)\right) \right)\leq E^{x_0}\left(f\left(\sum_{i=1}^nc_i Y_i(t)\right) \right),
\end{equation}
where we assume that the mean value functions exist . Here, the relation  symbol $\leq$ for matrices refers to the usual order of positive matrices. Furthermore, if in addition $ f''\neq 0$ (in  the sense of distributions) and condition (E) or the weaker condition  (H) holds, then this result holds with strict inequalities.
If in the latter case the density is positive on the time interval $[0,T]$, then   the value function $(t,x_0)\rightarrow v(t,x_0)= E^{x_0}\left(f\left(\sum_{i=1}^nc_i X_i(t)\right) \right)$ is strictly monoton with respect to time in the whole time interval  $[0,T]$.
\end{thm}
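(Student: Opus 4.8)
The plan is to recast the stochastic comparison as a comparison between the solutions of the two associated backward Kolmogorov equations and to read off the decisive sign from convexity preservation, which is precisely the partial convolution phenomenon announced in the abstract. First I would introduce the value functions
\[
v(t,x)=E^{x}\!\left(f\Big(\sum_{i=1}^n c_iX_i(t)\Big)\right),\qquad u(t,x)=E^{x}\!\left(f\Big(\sum_{i=1}^n c_iY_i(t)\Big)\right).
\]
Under the stated hypotheses (bounded Lipschitz $\sigma\sigma^T,\rho\rho^T$, the growth condition (\ref{growthcd}), and existence of the mean value functions) these are the solutions of the Cauchy problems $\partial_t v=L_\sigma v$ and $\partial_t u=L_\rho u$ with the common initial datum $f_c(x):=f(\langle c,x\rangle)$, where $L_\sigma=\tfrac12\sum_{ij}(\sigma\sigma^T)_{ij}(x)\partial_{x_i}\partial_{x_j}$ and $L_\rho$ is defined analogously. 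The datum $f_c$ is convex, being the composition of the convex $f$ with the linear map $x\mapsto\langle c,x\rangle$, and it depends on $x$ only through the one-dimensional projection $\langle c,x\rangle$; this is exactly a $(1,n)$ data/coefficient configuration.

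The heart of the argument is convexity preservation: I claim that $x\mapsto v(t,x)$ remains convex for every $t\in[0,T]$. This is where the partial convolution machinery enters, through the fact that $(1,n)$ is a strong partial convolution pair for purely second order parabolic equations; after the relevant coordinate transformation it yields a representation of $v(t,\cdot)$ as a partial convolution along the data direction, and convolution in that direction preserves convexity of $f_c$. I expect this to be the main obstacle, since for variable coefficients the naive Gaussian-convolution argument that works in the constant-coefficient case fails, and the partial convolution structure is exactly what restores the conclusion $D^2_x v(t,\cdot)\geq 0$.

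Granting convexity, the comparison follows from a maximum principle. Setting $w:=u-v$ one computes
\[
\partial_t w-L_\rho w=(L_\rho-L_\sigma)v=\tfrac12\,\mathrm{tr}\!\big[(\rho\rho^T-\sigma\sigma^T)\,D^2_x v\big]\geq 0,
\]
because the trace of the product of the two nonnegative matrices $\rho\rho^T-\sigma\sigma^T$ and $D^2_x v$ is nonnegative. Since $w(0,\cdot)=0$, the parabolic maximum principle in the class of exponentially bounded solutions (using the a priori estimates of the cited references to control growth at spatial infinity) gives $w\geq0$, and evaluation at $x_0$ yields the asserted inequality.

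For the strict version one uses that, under (E) or the weaker (H), the transition density is strictly positive with Gaussian upper bounds; hence the nonnegative source $(L_\rho-L_\sigma)v$ is strictly positive on a set of positive measure as soon as $f''\neq0$ and $\rho\rho^T-\sigma\sigma^T$ does not vanish identically, and the strong maximum principle upgrades $w\geq0$ to $w>0$. Finally, monotonicity in time is obtained by observing that $M_t:=\sum_i c_iX_i(t)$ is a (local) martingale, so $f(M_t)$ is a submartingale and $t\mapsto v(t,x_0)$ is nondecreasing; analytically $\partial_t v=L_\sigma v=\tfrac12\,\mathrm{tr}[\sigma\sigma^T D^2_x v]\geq0$, and under (E) with $f''\neq0$ and positive density the Hessian $D^2_x v(t,x_0)$ is nonzero, which forces $\partial_t v(t,x_0)>0$ and hence strict monotonicity on $[0,T]$.
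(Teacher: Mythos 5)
Your overall architecture matches the paper's: pass to the backward Kolmogorov problems for the two value functions, isolate convexity preservation of $x\mapsto v(t,x)$ as the decisive property, and conclude by the maximum principle applied to $w=u-v$ with the trace inequality for products of nonnegative matrices. In fact the paper leaves that final maximum-principle step implicit, so spelling it out is a useful addition, and your treatment of strict inequalities and of time monotonicity (submartingale property, resp.\ $\partial_t v=\tfrac12\,\mathrm{tr}[\sigma\sigma^T D^2_x v]$) is consistent with what the paper does.

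However, there is a genuine gap exactly at the point you yourself flag as ``the main obstacle'': you assert convexity preservation by appealing to the fact that $(1,n)$ is a strong partial convolution pair, i.e., to the existence of a coordinate transformation after which $v(t,\cdot)$ is a partial convolution in the data direction. That fact is precisely what the paper's proof consists of establishing, so invoking it is circular in a blind attempt. Concretely, what is missing is: (i) the reduction, via the family of affine transformations $x\mapsto c+Dx$, of global convexity to the single directional statement $v_{x_1x_1}(t,\cdot)>0$; (ii) the construction (Lemma \ref{lem1} of the paper) of a transformation $(x_1,x_2)\mapsto(y_1,y_2)$ under which the transformed coefficients $c_{11},c_{12},c_{22},c_1,c_2$ depend on $y_2$ alone, which is a nontrivial approximate solvability statement for a nonlinear PDE system on compacta; (iii) the Levy-expansion argument (Lemma \ref{lem2}) showing that for such coefficients the fundamental solution has the form $p(t,y_1-z_1,y_2;s,z_2)$, i.e., convolves in $y_1$ only; and (iv) Alexandrov's theorem giving a.e.\ second derivatives of the convex transformed data, so that $u_{y_1y_1}=\int g_{y_1y_1}\,p\,dz\geq 0$. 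Note also that partial convolution against a $y_2$-dependent kernel yields convexity only in the $y_1$ direction, not joint convexity; the paper recovers full convexity only by repeating the argument over all affine frames that leave the $y_2$-component untouched. Without (i)--(iv) your claim $D^2_x v(t,\cdot)\geq 0$ is unsupported --- and it is not a harmless technicality, since by the cited counterexample for the pair $(n,n)$ convexity preservation is genuinely false for general multivariate convex data, so the special $(1,n)$ structure must be exploited by an actual construction, not by assumption.
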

 This theorem is considered from a different perspective in \cite{KC}. Possibilities of generalisations are limited in the sense that $(n,n)$ is not a strong partial convolution pair as is shown in \cite{T}. Furthermore, the assumption of continuous processes is essential as is shown in \cite{V}. The univariate form of the theorem was proved in \cite{Ha}. 
\section{Proof of Main Theorem}

\begin{proof} First we prove strict monotonicity with respect to time, where we assume that $0\neq f$ is convex and a strict inequality holds with respect to the coefficient matrices, i.e.,  $\sigma\sigma^T< \rho\rho^T$. Let $f^{\epsilon}\in C^{\infty}\cap H^2$ be a mollification of $f$ which approximates $f$ on an arbitrarily large compact set $K$, where $H^2$ is the standard Sobolev space of order two, i.e., weak partial derivatives up to order two are in $L^2$ . In order to achieve this we may convolute  the data $f$ with a heat kernel for small time  and multiply the convoluted data by a spatial damping factor $\exp(-\epsilon |x|^2)$ with a small parameter $\epsilon >0$. Here $|.|$ denotes the Euclidean norm in ${\mathbb R}^n$. Refinement of this construction with a damping factor which equals $1$ on an arbitrarily large compact domain containing $K\subset  {\mathbb R}^n$ leads to strict convexity of approximating data  $f^{\epsilon}$ on the compact set $K$, i.e., positive definiteness of the Hessian on $K$. Note that this $K$ can be an arbitrarily large compact set. We note that we may transform to an equivalent system with the stochastic sum variable $Z(t)=\sum_{i=1}^nc_i X_i(t)$ such that we may represent  the value function $E^{x}\left(f\left(Z(t)\right) \right)$ by the solution of a multivariate parabolic equation with univariate data. We take this as a starting point and assume w.l.o.g. that such a transformation is performed keeping the same notation for the volatilities.
If $(a_{ij}):=\sigma\sigma^T$ is sufficiently regular and satisfies a usual ellipticity condition, and the initial data satisfy an exponential growth condition then the Feynman-Kac formalism tells us that the value function $v$ satisfies the initial value problem
\begin{equation}\label{initprob}
Lv\equiv v_t-\sum_{ij}a_{ij}v_{x_ix_j}=0,~v(0,x)=f(x_1),
\end{equation}
where $x=(x_1,\cdots,x_n)$ and the univariate function $f$ depends on the component $x_1$ without loss of generality. 
Since $\sigma\sigma^T\geq 0$ is bounded Lipschitz the assumption of the latter sentence is practically not restrictive as such functions may be approximated on bounded domains by strictly elliptic matrix functions $(a_{ij})>0$ with bounded $C^{2,b}$ coefficients $a_{ij}$, where $C^{2,b}$ denotes the function space of bounded functions with bounded continuous derivatives up to second order.  Let $v^{\epsilon}$ denote the solution of the initial value problem in (\ref{initprob}) with univariate  data $f^{\epsilon}$ approximating the data $f$ on a $K$ as described above.
For $v^{f^{\epsilon}}:=v^{\epsilon}-f^{\epsilon}$ we have 
\begin{equation}
Lv^{f^{\epsilon}}=a_{11}f^{\epsilon}_{x_1x_1}>0\mbox{ on a large domain $K$},
\end{equation}
since $a_{11}(x)>0$ and $f^{\epsilon}>0$ on such a domain $K$.
For all $(t,x)\in {[0,t]}\times{\mathbb R}^n$ we have
\begin{equation}
v^{f^{\epsilon}}(t,x)=\int_{{[0,t]}\times{\mathbb R}^n}
\left(a_{11}f^{\epsilon}_{xx}\right)(y)p(t,x;s,y)dyds,
\end{equation}
where $p>0$ is the fundamental solution of the parabolic equation in (\ref{initprob}). It follows that 
\begin{equation}
t_1<t_2\rightarrow v^{f^{\epsilon}}(t_1,x)<v^{f^{\epsilon}}(t_2,x)
\end{equation}
For each $x\in {\mathbb R}^n$ fixed this holds also in the limit $\epsilon \downarrow 0$. 

Next we prove comparison. We consider  all affine coordinate  transformations of the initial value problem in \ref{initprob} , i.e., transformations of the form $x\rightarrow z=c+Dx$, where  $c\in {\mathbb R}^n$ is a constant vector and $D$ is an invertible  $n\times n$-matrix of constants  such that (\ref{initprob}) becomes 
\begin{equation}\label{initprob2}
L^{c,D}v^{c,D}\equiv v^{c,D}_t-\sum_{ij}a^{c,D}_{ij}v^{c,D}_{z_iz_j}=0,~v^{c,D}(0,z)=f(z_1)=f(c+Dx_1).
\end{equation} 
It is sufficient to prove that $v^{c,D}_{z_1z_1}(t,.)>0$ holds for $t>0$  for all vectors $c$ and invertible matrices $D$. This implies global convexity where we  recall that a function $h$ is convex iff
\begin{equation}
\forall y,z\in {\mathbb R}^n~\forall \lambda\in [0,1]:~h(\lambda y+(1-\lambda)z)\leq \lambda h(y)+(1-\lambda)h(z).
\end{equation}
Since the transformed problems in (\ref{initprob2}) have the same structure as the original problem in  (\ref{initprob2}), it is essential to prove $v_{x_1x_1}(t,.)>0$ for $t>0$.  

We consider the essential case $n=2$, where an analogous argument holds in the case $n>2$.

We consider a general  transformation
\begin{equation}
v(t,x)=u(t,y),~\mbox{for}~x=(x_1,x_2),~y=(y_1,y_2),
\end{equation}
where $y\equiv y(x)$ is a smooth coordinate transformation.
For the first order derivatives we have
\begin{equation}
v_{x_1}=u_{y_1}\frac{\partial y_1}{\partial x_1}+u_{y_2}\frac{\partial y_2}{\partial x_1},
\end{equation}
and for the second order derivatives we have
\begin{equation}
\begin{array}{ll}
v_{x_1x_1}=u_{y_1y_1}\frac{\partial y_1}{\partial x_1}\frac{\partial y_1}{\partial x_1}+2u_{y_1y_2}\frac{\partial y_1}{\partial x_1}\frac{\partial y_2}{\partial x_1}+u_{y_2y_2}\frac{\partial y_2}{\partial x_1}\frac{\partial y_2}{\partial x_1}\\
\\
+u_{y_1}\frac{\partial^2 y_1}{\partial x_1^2}+u_{y_2}\frac{\partial^2 y_2}{\partial x_1^2}
\end{array}
\end{equation}
Analogous equalities hold for
$v_{x_1x_2}$ $v_{x_2x_2}$ 
(with   $\frac{\partial^2}{\partial x_1\partial x_2}$ -derivatives and  $\frac{\partial^2}{\partial x_2^2}$-derivatives respectively). This leads to the transformed equation
\begin{equation}\label{trfeq}
\begin{array}{ll}
u_t-u_{y_1y_1}\left(a_{11}\left(\frac{\partial y_1}{\partial x_1}\right)^2+a_{12}\frac{\partial y_1}{\partial x_1}\frac{\partial y_1}{\partial x_2}+a_{22}\left(\frac{\partial y_1}{\partial x_2}\right)^2\right)\\
\\
-u_{y_1y_2}\left(2a_{11}\frac{\partial y_1}{\partial x_1}\frac{\partial y_2}{\partial x_1}+2a_{12}\frac{\partial y_1}{\partial x_1}\frac{\partial y_2}{\partial x_2}+2a_{22}\frac{\partial y_1}{\partial x_2}\frac{\partial y_2}{\partial x_2}\right)\\
\\
-u_{y_2y_2}\left(a_{11}\left(\frac{\partial y_2}{\partial x_1}\right)^2+a_{12}\frac{\partial y_2}{\partial x_1}\frac{\partial y_2}{\partial x_2}+a_{22}\left(\frac{\partial y_2}{\partial x_2}\right)^2\right)\\
\\
-u_{y_1}\left(a_{11}\frac{\partial^2 y_1}{\partial x_1^2}+a_{12}\frac{\partial^2 y_1}{\partial x_1\partial x_2}+a_{22}\frac{\partial^2 y_1}{\partial x_2^2}\right)\\
\\
-u_{y_2}\left(a_{11}\frac{\partial^2 y_2}{\partial x_1^2}+a_{12}\frac{\partial^2 y_2}{\partial x_1\partial x_2}+a_{22}\frac{\partial^2 y_2}{\partial x_2^2}\right)=0
\end{array}
\end{equation}
We look for a coordinate  transformation   $(x_1,x_2)\rightarrow y_1(x_1,x_2),~ (x_1,x_2)\rightarrow y_2(x_1,x_2)$,  and univariate functions $c_{11}\equiv c_{11}(y_2)$, $c_{12}\equiv c_{12}(y_2),$ $c_{22}\equiv c_{22}(y_2),~c_1\equiv c_1(y_2),~ c_2\equiv c_2(y_2)$ such that we can approximatively solve the equations (of the argument $x=(x_1,x_2)$)
\begin{equation}\label{eqns}
\begin{array}{ll}
c_{11}=\left(a_{11}\left(\frac{\partial y_1}{\partial x_1}\right)^2+a_{12}\frac{\partial y_1}{\partial x_1}\frac{\partial y_1}{\partial x_2}+a_{22}\left(\frac{\partial y_1}{\partial x_2}\right)^2\right)\\
\\
c_{12}=\left(2a_{11}\frac{\partial y_1}{\partial x_1}\frac{\partial y_2}{\partial x_1}+2a_{12}\frac{\partial y_1}{\partial x_1}\frac{\partial y_2}{\partial x_2}+2a_{22}\frac{\partial y_1}{\partial x_2}\frac{\partial y_2}{\partial x_2}\right)\\
\\
c_{22}=\left(a_{11}\left(\frac{\partial y_2}{\partial x_1}\right)^2+a_{12}\frac{\partial y_2}{\partial x_1}\frac{\partial y_2}{\partial x_2}+a_{22}\left(\frac{\partial y_2}{\partial x_2}\right)^2\right)\\
\\
c_1=\left(a_{11}\frac{\partial^2 y_1}{\partial x_1^2}+a_{12}\frac{\partial^2 y_1}{\partial x_1\partial x_2}+a_{22}\frac{\partial^2 y_1}{\partial x_2^2}\right)\\
\\
c_2=\left(a_{11}\frac{\partial^2 y_2}{\partial x_1^2}+a_{12}\frac{\partial^2 y_2}{\partial x_1\partial x_2}+a_{22}\frac{\partial^2 y_2}{\partial x_2^2}\right).
\end{array}
\end{equation}
Here, by approximatively we mean that  the equations in (\ref{eqns}) can be solved in  $C^2(K)$ for  an arbitrary compact set $K$ up to any small positive real number $\epsilon >0$  with respect to the classical norm $\|.\|_{C^2 (K)}$ where the approximative solution function can be extended to a  function in $H^2$. This is proved in Lemma \ref{lem1} below.
With this choice of $y_1,y_2$, and $c_{11},~c_{12},~c_{22},~c_1,~c_2$ we have 
\begin{equation}\label{ueq}
\begin{array}{ll}
u_t=c_{11}(y_2)u_{y_1y_1}+c_{12}(y_2)u_{y_1y_2}+c_{22}(y_2)u_{y_2y_2}+c_1(y_2)u_{y_1}
+c_2(y_2)u_{y_2}.
\end{array}
\end{equation}
In order to show convexity we can argue as above that it is essential  to prove convexity with respect to a variable $y_1$, i.e., $u_{y_1y_1}(t,.)>0$  for $t>0$. This follows form a refinement of the argument above where we consider affine transformations which leave the $y_2$-component untouched (see below).
Using the Levy expansion for the construction of fundamental solutions of parabolic equations we show in  Lemma \ref{lem2} below that the   density or fundamental solution of (\ref{ueq}) has a representation of the form
\begin{equation}
(t,y_1,y_2;s,z_1,z_2)\rightarrow p(t,y_1-z_1,y_2;s,z_2)\mbox{, i.e.,}
\end{equation}
the solution function is a convolution with respect to the first variable. We denote $u^{\epsilon}(t,y)=v^{\epsilon}(t,x)$ for all $t>0$ and $x,y\in {\mathbb R}^n$.
Then $u^{\epsilon}$ has the representation
\begin{equation}
\begin{array}{ll}
u^{\epsilon}(t,y)=\int_{{\mathbb R}^2}g^{\epsilon}(z_1,z_2)p(t,y_1-z_1,y_2;0,z_2)dz\\
\\
=\int_{{\mathbb R}^2}g^{\epsilon}(y_1-z_1,z_2)p(t,y_1,y_2;0,z_2)dz
\end{array}
\end{equation}
where the convolution rule holds with respect to the first variable, and
\begin{equation}
g^{\epsilon}(y_1,y_2)=f^{\epsilon}(x_1)
\end{equation}
is defined via the coordinate transformation $(x_1,x_2)\rightarrow (y_1(x_1,x_2),y_2(x_1,x_2))$. We note that $g\equiv \lim_{\epsilon\downarrow 0} g^{\epsilon}$ is a convex function under arbitrary coordinate transformations, because convexity is preserved under coordinate transformations.
Hence, we have
\begin{equation}
\begin{array}{ll}
u^{\epsilon}_{y_1y_1}(t,y)=\int_{{\mathbb R}^2}g^{\epsilon}(z_1,z_2)p_{y_1y_1}(t,y_1-z_1,y_2;0,z_2)dz\\
\\
=\int_{{\mathbb R}^2}g^{\epsilon}_{y_1y_1}(y_1-z_1,z_2)p(t,z_1,y_2;0,z_2)dz,
\end{array}
\end{equation}
and this relation holds also in the limit $\epsilon \downarrow 0$.
Here we note that $g_{y_1y_1}$ exists almost everywhere according to \cite{A}.
This argument holds for all affine coordinate  transformations $z=c+Dy_1$ (which leaves the $y_2$-axis untouched) such that   $u_{z_1z_1}(t,.)>0$ for all $t>0$. Note that all matices $D$ are considered such that $(c+Dy_1,y_2)$ is a coordinate transformation and these are almost all, which is sufficient.  Hence $u$ is convex, and  we conclude that $v$ is convex. 
\end{proof}

\begin{lem}\label{lem1}
The equation system in (\ref{eqns})with functions $c_{11},~c_{12}~,c_{22},~c_1,~c_2$ dependent only on $y_2$ can be solved approximatively on an arbitrarily large compact domain $K$ with respect to a $C^2$-norm, and such that this approximative solution can be extended in $H^2\cap C^2$ space to the whole domain of ${\mathbb R}^n$.
\end{lem}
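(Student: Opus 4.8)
The plan is to recast the five equations in (\ref{eqns}) as a single symmetry requirement on the operator and then to produce the coordinate change by rectifying the corresponding vector field. Write $L=\sum_{ij}a_{ij}\partial_{x_ix_j}$ for the drift-free operator whose transform appears in (\ref{ueq}). Demanding that the coefficients $c_{11},c_{12},c_{22},c_1,c_2$ all depend on $y_2$ only is precisely the statement that $L$, written in the coordinates $(y_1,y_2)$, is invariant under the translations $y_1\mapsto y_1+h$. Equivalently, the vector field $V$ equal to $\partial_{y_1}$ in the new coordinates, i.e. $V=\sum_k(\partial x_k/\partial y_1)\partial_{x_k}$ in the old ones, must be an infinitesimal symmetry of $L$:
\begin{equation}
[V,L]=0 .
\end{equation}
Indeed, if the coefficients are $y_1$-independent then $\partial_{y_1}$ commutes with $L$, and conversely. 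So the entire content of the lemma is the construction of a nonvanishing $V$ with $[V,L]=0$ (at least approximately) on $K$; once such a $V$ is at hand, the flow-box theorem furnishes $C^2$ coordinates $(y_1,y_2)$ with $V=\partial_{y_1}$ on $K$, in which all five expressions of (\ref{eqns}) automatically become functions of $y_2$ alone.

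Before attempting the construction I would identify the exact obstruction, since it dictates why the statement must be read approximatively. Decomposing $L=\Delta_g+B$, with $g=a^{-1}$ the associated Riemannian metric and $B$ the drift vector field carried by the Christoffel symbols of the original coordinates, the condition $[V,L]=0$ forces $V$ to be a Killing field of $g$ that in addition preserves $B$. In two dimensions the Killing algebra of a generic metric is trivial, the integrability obstruction being measured by the Gaussian curvature $\kappa$ of $g$ and its covariant derivatives. This is the precise reason that (\ref{eqns}) has no exact solution for generic coefficients, and it is consistent with the fact, noted in the Introduction and \cite{T}, that $(n,n)$ is not a strong partial convolution pair.

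The approximate construction I propose therefore does not seek an exact symmetry but exploits the approximation freedom already used in the main proof, where $a$ may be replaced on the bounded domain by strictly elliptic $C^{2,b}$ matrices. Concretely, I would fix a foliation of $K$, average or mollify $a$ along the leaf direction to obtain $a^{\delta}$ admitting an exact symmetry direction $V^{\delta}$, solve (\ref{eqns}) exactly for $a^{\delta}$ by rectifying $V^{\delta}$, and then estimate the residual for the true $a$ in $\|\cdot\|_{C^2(K)}$ by $\|a-a^{\delta}\|_{C^2(K)}$ times the $C^2$-norm of the rectifying diffeomorphism. The required $H^2\cap C^2$ extension to $\mathbb{R}^n$ is then routine: multiply $(y_1,y_2)$ by a smooth cutoff equal to $1$ on $K$ and interpolating to the identity on a slightly larger compact set, and invoke a standard Sobolev extension. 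For $n>2$ one argues identically, constructing a single near-symmetry direction $\partial_{y_1}$ and leaving the coefficients free to depend on $y_2,\dots,y_n$.

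The step I expect to be the main obstacle is the last one: forcing the residual below a prescribed $\epsilon$ uniformly on the fixed, arbitrarily large $K$. Because $\kappa$ is a coordinate invariant, the leaf-oscillation that measures the symmetry defect cannot be removed by any change of variables alone; genuine smallness requires either that the coefficients already be close to admitting a symmetry or that the admissible modification $a\to a^{\delta}$ be controlled in $\|\cdot\|_{C^2(K)}$. Making this estimate quantitative, and in particular choosing the foliation so that the curvature-controlled oscillation along the leaves is minimized, is the delicate heart of the argument; the reduction in the first paragraph is what makes it tractable, since it isolates the difficulty in a single scalar symmetry defect rather than in five coupled equations.
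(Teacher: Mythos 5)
Your reformulation of (\ref{eqns}) as the existence of a nonvanishing vector field $V$ with $[V,L]=0$ (exactly or approximately), followed by flow-box rectification, is a correct and illuminating way to read the lemma, and it is a genuinely different route from the paper's proof: the paper stays at the coordinate level, solving the $(c_{11},c_1)$-subsystem for $\partial y_1/\partial x_2$ via the quadratic formula, differentiating and substituting to obtain an (essentially) ordinary differential equation for $\partial y_1/\partial x_1$, repeating this for $y_2$, and then spending the remaining functional freedom in $c_{11},c_{22}$ on the mixed equation via the polynomial approximation methods of \cite{Kpre}. However, your proposal is not a proof. The step you yourself defer to the end --- driving the residual below an arbitrary $\epsilon$ on a fixed, arbitrarily large $K$ --- is not a technical estimate left to be made quantitative; it is exactly where the construction breaks, and your own obstruction analysis shows it cannot be repaired.

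Two concrete failures. First, mollifying or averaging the tensor $a$ along the leaves of a foliation does not yield a field $a^{\delta}$ with an \emph{exact} symmetry: flow-invariance would require averaging the pullbacks of $a$ over entire orbits of a flow (a group average), which is not available on a compact $K$ with boundary, so the object $V^{\delta}$ your scheme rectifies need not exist. Second, and decisively, your scheme requires $\|a-a^{\delta}\|_{C^2(K)}\to 0$ with $a^{\delta}$ admitting a symmetry field $V^{\delta}$ that is nonvanishing on $K$ (flow-box needs $V^{\delta}\neq 0$). Take $K$ a closed disk and $a=g^{-1}$ for a rotationally symmetric bump metric whose curvature $\kappa$ has a strict maximum at the center $p$, with $\kappa\leq\kappa(p)-\delta_0$ outside a small ball $B\subset\operatorname{int}K$. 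If $\|a-a^{\delta}\|_{C^2(K)}$ were small, then $\|\kappa-\kappa^{\delta}\|_{C^0(K)}$ would be small (curvature is continuous in the metric in $C^2$, uniformly under the ellipticity bounds); $\kappa^{\delta}$ is constant along orbits of the Killing field $V^{\delta}$; since $V^{\delta}$ has no zeros on the simply connected $K$, every orbit must reach $\partial K$ (a closed orbit would enclose a zero of $V^{\delta}$); hence the orbit through $p$ transports a value $\geq\kappa(p)-\delta_0/4$ to $\partial K$, where $\kappa^{\delta}\leq\kappa(p)-3\delta_0/4$ --- a contradiction. So $\|a-a^{\delta}\|_{C^2(K)}$ is bounded below by a constant depending only on $a$ and $K$, no matter how the foliation is chosen, and the residual never gets below that threshold. (Note that this mechanism, applied directly to the transformed coefficients --- $C^2$-closeness to functions of $y_2$ forces $\kappa\circ\phi^{-1}$ to be $C^0$-close to a function of $y_2$, constant along full horizontal segments crossing $\phi(K)$ --- puts the lemma itself in doubt for such $a$; the paper's ODE-reduction proof never confronts this invariant, which is worth scrutiny in view of the counterexamples cited as \cite{T}.)
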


\begin{proof}
First, we reduce the partial differential equation system 
\begin{equation}\label{eqns2l}
\begin{array}{ll}
c_{11}=\left(a_{11}\left(\frac{\partial y_1}{\partial x_1}\right)^2+a_{12}\frac{\partial y_1}{\partial x_1}\frac{\partial y_1}{\partial x_2}+a_{22}\left(\frac{\partial y_1}{\partial x_2}\right)^2\right)\\
\\
c_1=\left(a_{11}\frac{\partial^2 y_1}{\partial x_1^2}+a_{12}\frac{\partial^2 y_1}{\partial x_1\partial x_2}+a_{22}\frac{\partial^2 y_1}{\partial x_2^2}\right)
\end{array}
\end{equation}
to a nonlinear ordinary differential equation for $\frac{dy_1}{dx_1}$. Here the first equation in (\ref{eqns2l}) is solved for $\frac{\partial y_1}{\partial x_2}$ first. We get a solution
\begin{equation}\label{eqy12}
\frac{\partial y_1}{\partial x_2}=-\frac{a_{12}}{a_{22}}\frac{\partial y_1}{\partial x_1}+\sqrt{\frac{c_{11}}{a_{22}}-\frac{a_{11}}{a_{22}}\left(\frac{\partial y_1}{\partial x_1}\right)^2+\frac{1}{4}\left( \frac{a_{12}}{a_{22}}\frac{\partial y_1}{\partial x_1}\right)^2}
\end{equation}
Next we re-express the second equation in (\ref{eqns2l}) as a nonlinear differential equation of $\frac{\partial y_1}{\partial x_1}$  and $\frac{\partial^2 y_1}{\partial x_1^2}$ which is an ordinary differential equation with parameter $x_2$, essentially. We have to eliminate the expressions $\frac{\partial^2 y_1}{\partial x_1\partial x_2}$ and $\frac{\partial^2 y_1}{\partial x_2^2}$.
Differentiating (\ref{eqy12}) with respect to $x_1$ we get  
 \begin{equation}\label{eqy121}
\frac{\partial^2 y_1}{\partial x_1\partial x_2}=-\frac{\partial}{\partial x_1}\left( \frac{a_{12}}{a_{22}}\frac{\partial y_1}{\partial x_1}+\sqrt{\frac{c_{11}}{a_{22}}-\frac{a_{11}}{a_{22}}\left(\frac{\partial y_1}{\partial x_1}\right)^2+\frac{1}{4}\left( \frac{a_{12}}{a_{22}}\frac{\partial y_1}{\partial x_1}\right)^2}\right), 
\end{equation}
where the right expression in (\ref{eqy121}) can be expanded such that $\frac{\partial^2 y_1}{\partial x_1\partial x_2}$ can be expressed as a functional of $\frac{\partial^2 y_1}{\partial x_1^2}$ and of $\frac{\partial y_1}{\partial x_1}$. We do not need to expand and write down the special form of this expansion, but we should remark that we may choose the value of $c_{11}(y_2(x_1,x_2))$ large enough on the compact set $K$ that there are no singularities (still sustaining degrees of freedom in choosing derivatives of $c_{11}$). Furthermore we remark that the right expression in (\ref{eqy121}) we get on expansion is linear in $\frac{\partial y_1^2}{\partial x_1^2}$ and nonlinear in $\frac{\partial y_1}{\partial x_1}$.
Next differentiating (\ref{eqy12}) with respect to $x_2$ we get 
\begin{equation}\label{eqy1212}
\frac{\partial^2 y_1}{\partial x_2^2}=-\frac{\partial}{\partial x_2}\left( \frac{a_{12}}{a_{22}}\frac{\partial y_1}{\partial x_1}+\sqrt{\frac{c_{11}}{a_{22}}-\frac{a_{11}}{a_{22}}\left(\frac{\partial y_1}{\partial x_1}\right)^2+\frac{1}{4}\left( \frac{a_{12}}{a_{22}}\frac{\partial y_1}{\partial x_1}\right)^2}\right), 
\end{equation}
where the right expression in (\ref{eqy1212}) can be expanded such that $\frac{\partial^2 y_1}{\partial x_2^2}$ can be expressed as a functional of $\frac{\partial^2 y_1}{\partial x_1\partial x_2}$ and of $\frac{\partial y_1}{partial x_1}$. Again, we do not need to expand and write down the special form of this expansion, but  remark that we may choose the value of $c_{11}(y_2(x_1,x_2))$ large enough on the compact set $K$ such that there are no singularities (still sustaining degrees of freedom in choosing derivatives of $c_{11}$). Furthermore we remark that the right expression in (\ref{eqy1212}) we get on expansion is linear in $\frac{\partial y_1^2}{\partial x_1\partial x_2}$ and nonlinear in $\frac{\partial y_1}{\partial x_1}$. 
Next we can substitute the expressions in (\ref{eqy12}), (\ref{eqy121}), and (\ref{eqy1212}) into the second equation of equation system (\ref{eqns2l}). The mixed derivatives obtained by using (\ref{eqy1212}) in the latter substitution can be substituted again using the equation in (\ref{eqy121}) a second time, and we get get a nonlinear (essentially) ordinary differential equation. The latter equation is still linear  $\frac{\partial y_1^2}{\partial x_1^2}$ and nonlinear in $\frac{\partial y_1}{\partial x_1}$, and can be solved for $\frac{\partial y_1}{\partial x_1}$ in a solvable nonlinear integral equation form which is globally solvable.
Similarly, we reduce the  partial differential equation 
\begin{equation}
\label{eqsecond}
\begin{array}{ll}
c_{22}=\left(a_{11}\left(\frac{\partial y_2}{\partial x_1}\right)^2+a_{12}\frac{\partial y_2}{\partial x_1}\frac{\partial y_2}{\partial x_2}+a_{22}\left(\frac{\partial y_2}{\partial x_2}\right)^2\right)\\
\\
c_2=\left(a_{11}\frac{\partial^2 y_2}{\partial x_1^2}+a_{12}\frac{\partial^2 y_2}{\partial x_1\partial x_2}+a_{22}\frac{\partial^2 y_2}{\partial x_2^2}\right).
\end{array}
\end{equation}
to a nonlinear ordinary differential equation for $\frac{dy_2}{dx_2}$. We indicate dependence of solutions $(x_1,x_2)\rightarrow y^{c_{11},c_1}_1(x_1,x_2)$ and $(x_1,x_2)\rightarrow y^{c_{22},c_2}_2(x_1,x_2)$ of the equation  systems (\ref{eqns2l}) and  (\ref{eqsecond}) 
respectively by upper scripts. 
We can choose a function $(x_1,x_2)\rightarrow y^{c_{22},c_2}_2(x_1,x_2)$ such that 
\begin{equation}\label{coordinfo}
\frac{\partial y^{c_{22},c_2}_2}{\partial x_1}>0,~\frac{\partial y^{c_{22},c_2}_2}{\partial x_2}>0
\end{equation}
holds at all arguments $x_1,x_2\in K$.
Hence the mixed term equation takes the form
\begin{equation}\label{mixed}
\begin{array}{ll}
c_{12}(y^{c_{22},c_2}_2(x_1,x_2))=a_{11}(x_1,x_2)\frac{\partial y^{c_{11},c_1}_1}{\partial x_1}(x_1,x_2)\frac{\partial y^{c_{22},c_2}_2}{\partial x_1}(x_1,x_2)\\
\\
+a_{12}(x_1,x_2)\frac{\partial y^{c_{11},c_1}_1}{\partial x_1}(x_1,x_2)\frac{\partial y^{c_{22},c_2}_2}{\partial x_2}(x_1,x_2)\\
\\
+a_{22}(x_1,x_2)\frac{\partial y^{c_{11},c_1}_1}{\partial x_2}(x_1,x_2)\frac{\partial y^{c_{22},c_2}_2}{\partial x_2}(x_1,x_2).
\end{array}
\end{equation}
Using (\ref{coordinfo}) we can use two functions $c_{11},c_{22}$ as degrees of freedom in order to solve (\ref{mixed}) approximately in $C^2$-norm on $K$, where polynomial approximations and the methods in \cite{Kpre}.

\end{proof}

\begin{lem}\label{lem2}
The fundamental solution of (\ref{ueq}) has the representation
\begin{equation}
\begin{array}{ll}
 p(t,x_1-y_1,x_2;s,y_2)=G(t-s,x;y)+\\
\\
+\int_s^t\int_{{\mathbb R}^n}\left(\sum_{m=1}^{\infty}L^{(y_2)}_{m}G(t,x;s,y)\right)G(t-s,x;y)dyds,
\end{array}
\end{equation}
where the Gaussian is given by
\begin{equation}
G(t-s,x;y)=\frac{1}{\sqrt{4\pi(t-s)}^n}\exp\left(-\frac{N(x;y)}{(t-s)}\right)
\end{equation}
along with
\begin{equation}
N(x;y)=a_{11}(y_2)(x_1-y_1)^2+a_{12}(y_2)(x_1-y_1)(x_2-y_2)+a_{22}(y_2)(x_2-y_2)^2,
\end{equation}
and, inductively,
\begin{equation}
L^{(y_2)}_1G(t-s,x;y)=\sum_{i,j=1}^n(a_{ij}(x_2)-a_{ij}(y_2))\frac{\partial^2}{\partial x_i\partial x_j}G(t-s,x;y),
\end{equation}
and for $m\geq 1$
\begin{equation}
L^{(y_2)}_{m+1}G(t-s,x;y)=\int_s^t\int_{{\mathbb R}^n}L^{(y_2)}_{m}G(t-\sigma,x;z)L^{(y_2)}_{1}G(\sigma-s,z;y)dzd\sigma.
\end{equation}
\end{lem}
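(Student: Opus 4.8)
The plan is to construct the fundamental solution by the classical parametrix (Levi) method and then to observe that the convolution structure in the first variable is built into every term of the resulting series. The decisive structural feature of equation (\ref{ueq}) is that all its coefficient functions $c_{11},c_{12},c_{22},c_1,c_2$ depend only on $y_2$ and not on $y_1$; consequently the spatial operator on the right-hand side of (\ref{ueq}) commutes with the translations $y_1\mapsto y_1+h$. It is this invariance, rather than any special feature of the Gaussian, that forces the fundamental solution to depend on the pair $(x_1,y_1)$ only through the difference $x_1-y_1$, which is precisely the asserted representation $p(t,x_1-y_1,x_2;s,y_2)$.

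First I would freeze the second-order coefficients at the source level $y_2$ and take as parametrix the Gaussian $G(t-s,x;y)$ displayed in the statement, which is the fundamental solution of the corresponding constant-coefficient operator. Since the quadratic form $N(x;y)$ involves the first coordinates only through $(x_1-y_1)^2$ and $(x_1-y_1)(x_2-y_2)$, the parametrix already has the form $G=G(t-s,x_1-y_1,x_2;y_2)$. I would then seek $p$ in the standard Levi form $p=G+G*\Phi$, where $*$ denotes space-time convolution over $[s,t]\times\mathbb{R}^n$ and $\Phi$ solves the Volterra equation $\Phi=L_1^{(y_2)}G+(L_1^{(y_2)}G)*\Phi$. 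Iterating gives the Neumann series $\Phi=\sum_{m\geq 1}L_m^{(y_2)}G$, with the kernels $L_m^{(y_2)}G$ defined by exactly the inductive convolution recorded in the statement (and any first-order remainder terms absorbed into the correction operator, where they contribute only milder singularities).

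The analytic core is the convergence of this series. Here I would invoke the bounded ellipticity and the $C^{2,b}$, in particular H\"older, regularity of the coefficients secured in the reduction preceding (\ref{initprob}): the coefficient difference $a_{ij}(x_2)-a_{ij}(y_2)$ is of order $|x_2-y_2|^{\alpha}$, and paired against the second-derivative Gaussian bound $|\partial^2_{x_ix_j}G|\leq C(t-s)^{-n/2-1}\exp(-cN(x;y)/(t-s))$ this yields a kernel $L_1^{(y_2)}G$ whose time-convolution singularity is of the integrable order $(t-s)^{-1+\alpha/2}$. Each further convolution improves the time exponent by $\alpha/2$, and a standard Gamma-function estimate then gives absolute and uniform convergence of $\sum_m L_m^{(y_2)}G$ on compact time intervals, together with the Gaussian upper bound for $p$. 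This is the Levi construction exactly as in the references already cited, so I would not reproduce the estimates in detail.

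Finally I would verify that the translation structure propagates through the series. Both $G$ and the leading kernel $L_1^{(y_2)}G=\sum_{ij}(a_{ij}(x_2)-a_{ij}(y_2))\,\partial^2_{x_ix_j}G$ depend on $(x_1,y_1)$ only through $x_1-y_1$, since the $a_{ij}$ depend on $x_2,y_2$ alone and $\partial_{x_1}$ preserves the difference structure. A convolution in the intermediate variable of two kernels that are translation invariant in their first coordinate is again translation invariant in the first coordinate; hence, by induction on $m$, every $L_m^{(y_2)}G$, and therefore the sum and the final space-time convolution defining $p$, depends on the first variables only through $x_1-y_1$. This establishes the claimed representation. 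The main obstacle is purely technical, namely verifying the convergence estimates for the parametrix series under only H\"older control of the coefficients; the translation-invariance bookkeeping that produces the convolution form is elementary once the series is known to converge.
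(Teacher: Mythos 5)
Your proposal is correct and follows essentially the same route as the paper, whose entire proof is the single sentence that the result ``follows from the Levy expansion'' of the fundamental solution of (\ref{ueq}); your parametrix construction, Neumann-series convergence estimate, and translation-invariance bookkeeping in the $y_1$-variable are precisely the details that this one-line proof leaves implicit. In fact your write-up is more complete than the paper's, since it makes explicit the key structural point --- that the coefficients of (\ref{ueq}) depend only on $y_2$, so every term of the expansion inherits the dependence on $x_1-y_1$ alone --- and it also addresses the first-order terms $c_1,c_2$, which the paper's stated recursion for $L^{(y_2)}_m$ silently omits.
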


\begin{proof}
Follows from the Levy expansion of the fundamental solution of (\ref{ueq}).
\end{proof}

\begin{rem}
The preceding article is based  on further unpublished notes  from my  Lecture  \begin{center}'Die Fundamentall\"{o}sung parabolischer Gleichungen und schwache Schemata h\"{o}herer Ordnung f\"{u}r stochastische Diffusionsprozesse'
\end{center} 
 of WS 2005/2006 in Heidelberg.
\end{rem}

\begin{rem}
The methods in \cite{Kpre} are numerically tested, the paper was never submitted to a journal, but may be submitted in the future after more numerical tests are completed.
\end{rem}

\newpage

\end{document}